\documentclass{amsart}
\usepackage{amssymb}
\usepackage{epsfig}
\usepackage{fancyhdr}

\def\Div{{\rm div\,}}

\def\T{\rm T}
\def\D{\rm D}
\def\R{\rm R}

\newcommand{\al}{\alpha}

\newcommand{\ga}{\gamma}
\newcommand{\de}{\delta}

\newcommand{\eps}{\varepsilon}
\newcommand{\va}{\varphi}

\def\si{\sigma}
\def\De{\Delta}

\def\nb{\nabla}
\def\eps{\varepsilon}
\def\Om{\Omega}
\def\pa{\partial}
\def\Oi{\int_{\Omega}}

\def\ra{\rightarrow}

\newcommand{\benn}{\begin{eqnarray*}}
\newcommand{\eenn}{\end{eqnarray*}}
\newcommand{\ben}{\begin{eqnarray}}
\newcommand{\een}{\end{eqnarray}}
\newcommand{\bal}{\begin{aligned}}
\newcommand{\eal}{\end{aligned}}

\theoremstyle{plain}

\newtheorem{lemma}{Lemma}
\newtheorem{theorem}{Theorem}

\newtheorem{definition}[lemma]{Definition}

\def\dis{\displaystyle}

\def\\{\hfil\break}
\def\const{{\rm const}}
\def\D{{\Bbb D}}
\def\I{{\Bbb I}}
\def\N{{\Bbb N}}
\def\R{{\Bbb R}}
\def\T{{\Bbb T}}

\def\d{\tilde{d}}

%\pagestyle{fancy} %
%\rfoot{\today}

\title[Large flux for NSE]{Existence of global weak solutions \\ for Navier-Stokes equations with large flux}

\author[J. Renc{\l}awowicz \& W. M.
Zaj\c{a}czkowski]{Joanna Renc{\l}awowicz \& Wojciech M.
Zaj\c{a}czkowski}
\address{Joanna Renc{\l}awowicz:  Institute of Mathematics, Polish Academy of
Sciences, \'{S}nia\-dec\-kich 8, 00-956 Warsaw, Poland, e-mail:
jr@impan.gov.pl}
\address{Wojciech M. Zaj\c{a}czkowski:  Institute of Mathematics, Polish Academy of
Sciences, \'{S}niadeckich 8, 00-956 Warsaw, Poland, e-mail:
wz@impan.gov.pl and Institute of Mathematics and Cryptology,
Military University of Technology, Kaliskiego 2, 00-908 Warsaw,
Poland}

\thanks{Research supported by MNiSW grant no N N201 396937}
\date{March 24, 2010}

\begin{document}

\begin{abstract}
Global existence of weak solutions to the Navier-Stokes equation
in a cylindrical domain under the slip boundary conditions and
with inflow and outflow was proved. To prove the energy estimate,
crucial for the proof, we use the Hopf function. This makes us
possible to derive such estimate that the inflow and outflow must
not vanish as $t\ra\infty.$ The proof requires estimates in
weighted Sobolev spaces for solutions to the Poisson equation.
Finally, the paper is the first step to prove the existence of
global regular special solutions to the Navier-Stokes equations
with inflow and outflow.
\end{abstract}

\subjclass[2000]{Primary 35Q30; Secondary 76D03, 76D05}

\keywords{Navier-Stokes equation, weighted Sobolev spaces, Neumann boundary-value problem, Dirichlet boundary-value problem, global solutions, large flux}

\maketitle

\section{Introduction}

We consider viscous incompressible fluid motion in a finite
cylinder with large inflow and outflow, assuming boundary slip
conditions. Hence, the following initial boundary value problem is
examined.
\ben \label{NS} \bal %
&v_{t}+v\cdot\nabla v-\Div \T(v,p)=f\quad &{\rm in}\ \
 \Omega^T=\Omega\times(0,T),\\
&\Div v=0\quad &{\rm in}\ \ \Omega^T,\\
 &v\cdot\bar n=0\quad &{\rm
on}\  S_1^T ,\\
&\nu \bar n\cdot\D(v)\cdot\bar\tau_\alpha + \ga v \cdot
\bar\tau_{\al}=0,\ \ \alpha=1,2,\quad
 &{\rm on}\ \ S_1^T,\\
 &v\cdot\bar n=d\quad &{\rm
on}\  S_2^T ,\\
 & \bar n\cdot\D(v)\cdot\bar\tau_\alpha =0,\ \
\alpha=1,2,\quad
 &{\rm on}\ \ S_2^T,\\
&v\big|_{t=0}=v(0)\quad &{\rm in}\ \ \Omega, \eal \een \noindent
where $\Omega\subset\R^3$ is a cylindrical domain,
$S=\partial\Omega$, $v$ is the velocity of the fluid motion with
\mbox{$v(x,t)=(v_1(x,t),v_2(x,t),v_3(x,t))\in\R^3$} ,
$p=p(x,t)\in\R^1$ denotes the pressure,
$f=f(x,t)=(f_1(x,t),f_2(x,t),f_3(x,t))\in\R^3$ -- the external
force field, $x=(x_1, x_2, x_3)$ are the Cartesian coordinates,
$\bar n$ is the unit outward vector normal to the boundary $S$ and
$\bar\tau_\alpha$, $\alpha=1,2,$ are tangent vectors to $S$ and
$\cdot$ denotes the scalar product in $\R^3$.
 We define the stress tensor $\T(v,p)$ as
$$
\T(v,p)=\nu\D(v)-p\I,
$$
where $\nu$ is the constant viscosity coefficient and  $\I$ is the
unit matrix. Next, $\ga >0$ is the slip coefficient and $\D(v)$
denotes the dilatation tensor of the form
$$
\D(v)=\{v_{i,x_j}+v_{j,x_i}\}_{i,j=1,2,3}.$$

We assume that $\Omega\subset\R^3$ is a~cylindrical type domain
parallel to the axis $x_3$ with arbitrary cross section. We set $S
= S_1 \cup S_2$ where $S_1$ is the part of the boundary which is
parallel to the axis $x_3$ and $S_2$ is perpendicular to $x_3$.
Hence \benn S_1 & = & \{x\in\R^3:\varphi_0(x_1,x_2)=c_0,\ -a<x_3<a\}, \\
S_2(-a)& = & \{x\in\R^3:\varphi_0(x_1,x_2)<c_0,\ \ x_3 = -a \}, \\
S_2(a)& = & \{x\in\R^3:\varphi_0(x_1,x_2)<c_0,\ \ x_3 = a \} \eenn
where $a,c_0$ are positive given numbers and
$\varphi_0(x_1,x_2)=c_0$ describes a~sufficiently smooth closed
curve in the plane $x_3=\const.$

\begin{figure}[hbt]
\begin{center}
\epsfig{file=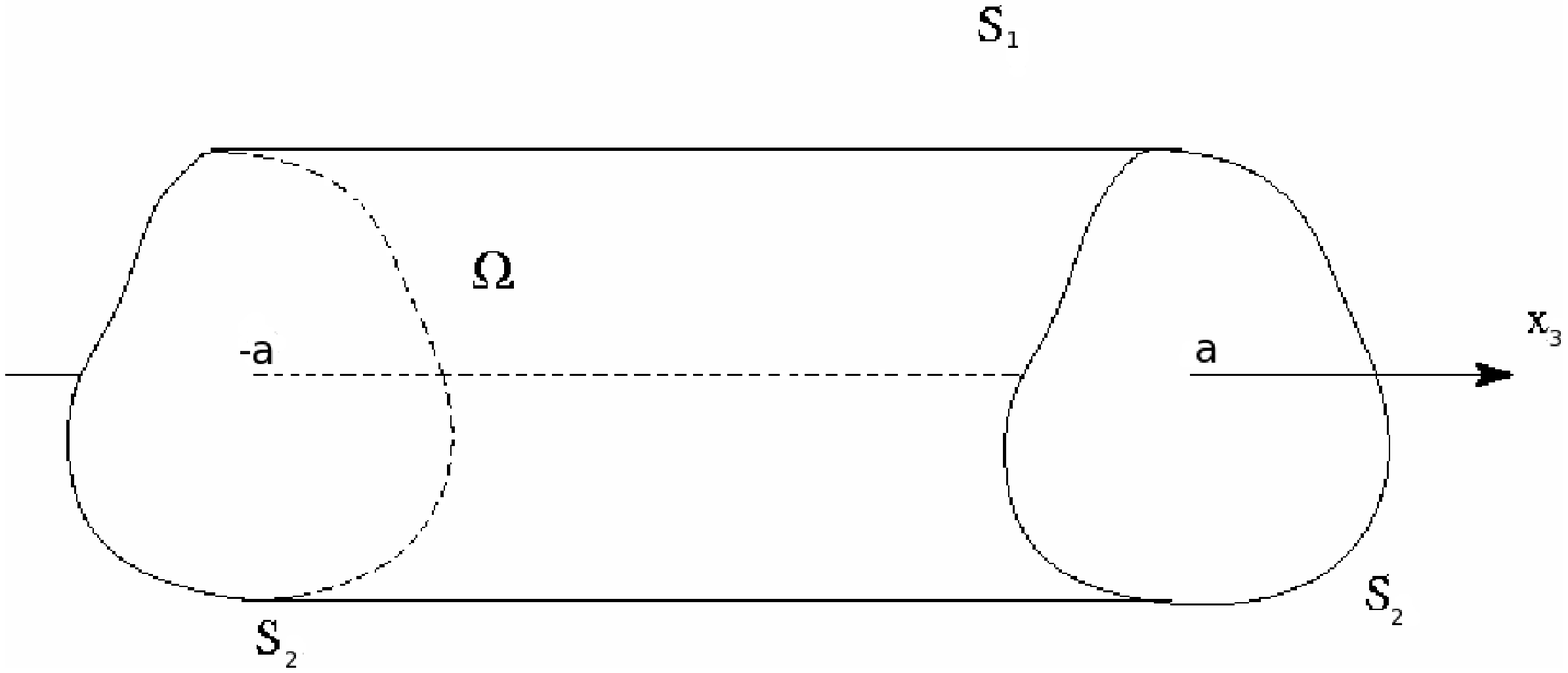,width=4.5 in}
\end{center}
\caption{Domain $\Omega.$}%
\label{om1}
\end{figure}

To describe inflow and outflow we define \ben \label{d} \bal d_1 &
=  -v\cdot \bar{n}|_{S_2(-a)} \\ d_2 & =  v \cdot
\bar{n}|_{S_2(a)} \eal \een with $d_i \ge 0, i=1,2.$ We infer
compatibility conditions \ben \label{d} \int_{S_2(-a)} d_1dS_2 =
\int_{S_2(a)} d_2 dS_2. \een

The aim of this paper is to prove the existence of global weak
solutions to problem (\ref{NS}) without restrictions on magnitudes
of external force $f,$ initial data $v(0),$ inflow $d_1$ and
outflow $d_2.$ We would like to show the existence of such
solutions that the flux does not have to vanish as $t \ra \infty.$
%We expect that the global solution converges to a stationary
%solution. Therefore we could prove the existence of stationary
%solution with inflow-outflow in a different way than proposed by
%O.A. Ladyzhenskaya \cite{L}.
The presented in our paper method
would allow us to prove the existence of global regular solutions
in the cylinder (in the meaning of \cite{RZ3}) which are much more
general than in \cite{K1}, \cite{K2}, \cite{Z} because in these
papers the flux must converge to zero sufficiently fast.

%We introduce the weighted spaces $V^l_{p,\be}(Q), Q\subset \R^3:$
%\benn
% \|u\|_{V^l_{p,\be}} (Q) = \left(\sum_{|\al| \le l} \int_{Q} dx' dx_3
% |D_x^{\al}u|^p |x_3|^{p(\be+|\al|-l)}\right)^{1/p}, \quad \be\in\R, l\in
% \N\cup\{0\} \eenn
%where $x'=(x_1, x_2)$ and $\al = (\al_1, \al_2, \al_3)$ is multi-index.
We define a space natural for the study of the weak solutions to the Navier-Stokes equations: %
\benn V_2^0(\Om^T) = \{u: ||u||_{V^0_2(\Om^T)} = {\rm ess}
\sup_{t\in(0,T)}||u||_{L_2(\Om)} + \left(\int_0^T ||\nb
u||_{L_2(\Om)}^2 dt \right)^{1/2} <\infty \}. \eenn

To simplify the notation, we do not distinguish between norms of scalar and vector function and we write $$\|f\|:=\sum_{i=1}^3 \|f_i\|\quad  {\rm for \ any} \quad f=(f_1, f_2, f_3).$$ We also use $$\|d\| :=\|d_1\| + \|d_2\|$$ for inflow $d_1$ and outflow $d_2.$

\begin{theorem}
Assume the compatibility condition (\ref{d}). Assume that $v(0)
\in L_2(\Om),$ $f \in L_2(0,T;L_{6/5}(\Om)),$ $d_i \in
L_{\infty}(0,T; W^{s-1/p}_p(S_2)) \bigcap
L_2(0,T;W^{1/2}_2(S_2)),$ $\frac{3}{p} +\frac{1}{3} \le s, p>3$ or
$p=3, s> \frac{4}{3},$ $d_{i,t} \in L_2(0,T;W^{1/6}_{6/5}(S_2)),
i=1,2.$
 Then there exists a weak solution $v$ to problem (\ref{NS}) such
that $v$ is weakly continuous with respect to $t$ in $L^2(\Om)$ norm and
$v$ converges to $v_0$ as $t\ra 0$ strongly in $L^2(\Om)$ norm.
Moreover, $v\in V^0_2(\Om^T),$ $v\cdot\bar{\tau}_{\al} \in
L_2(0,T;L_2(S_1)), \al =1,2,$ and $v$ satisfies \ben \label{est-v}
\bal & \|v\|^2_{V^0_2(\Om^t)} + \ga \sum_{\al=1}^2 \int_0^t
\|v\cdot\bar{\tau}_{\al}\|_{L_2(S_1)}^2 \le 2
\|f\|^2_{L_2(0,t;L_{6/5}(\Om))}
\\ & + \va\left(\sup_{\tau\le t}\|d\|_{W^{s-1/p}_3(S_2)}\right)\left(\|d\|_{L_2(0,t;W^{1/2}_2(S_2))}^2 +
\|d_t\|_{L_2(0,t;W^{1/6}_{6/5}(S_2))}^2\right) +
\|v(0)\|_{L_2(\Om)}^2 \eal \een where $\va$ is a nonlinear
positive increasing function of its argument and $t\le T.$
\end{theorem}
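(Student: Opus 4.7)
The plan is to reduce (\ref{NS}) to a problem with homogeneous flux on $S_2$ by subtracting a divergence-free extension $b$ of the boundary data, to derive (\ref{est-v}) by a Hopf-type localization, and then to pass from a Galerkin approximation to a weak solution by compactness. For the first step I construct $b=b(x,t)$ with $\Div b=0$ in $\Om$, $b\cdot\bar n=0$ on $S_1$, $-b\cdot\bar n=d_1$ on $S_2(-a)$ and $b\cdot\bar n=d_2$ on $S_2(a)$; solvability follows from (\ref{d}), for instance by taking $b=\nb\psi$ with $\psi$ a solution of a Neumann problem for the Laplacian. To earn the crucial smallness of the coupling $\Oi (u\cdot\nb)b\cdot u\,dx$, I apply a Hopf cutoff: multiply $b$ by a function $\chi_\eps$ equal to $1$ on $S_2$ and vanishing outside a strip of width $\eps$ around $S_2$, and then add a solenoidal correction so that the prescribed flux and $\Div b=0$ are preserved. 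The resulting field, still called $b$, is supported in that strip and obeys $|b(x)|\le C\eps\,\de(x)^{-1}\|d\|_{W^{s-1/p}_p(S_2)}$, where $\de(x)$ is the distance to $S_2$. Because $u\cdot\bar n=0$ on all of $\pa\Om$, Hardy's inequality then yields
$$
\Bigl|\Oi (u\cdot\nb)b\cdot u\,dx\Bigr|\le C\eps\,\va\bigl(\|d\|_{W^{s-1/p}_p}\bigr)\,\|\nb u\|_{\lo}^2.
$$
The weighted Sobolev estimates for the Poisson equation promised in the abstract are used here to control $b$, $b_t$, $\nb b$, $\De b$ in the norms appearing on the right-hand side of (\ref{est-v}).

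Next, setting $u=v-b$, I write the equation for $u$, which carries zero normal component on all of $\pa\Om$ and the slip condition on $S_1$. Multiplying by $u$ and integrating over $\Om$, the boundary terms collapse (thanks to the slip/no-penetration conditions) to $\ga\sum_{\al=1}^2\|u\cdot\bar\tau_\al\|_{L_2(S_1)}^2$ plus lower order in $b$. The convective term $\Oi(v\cdot\nb)v\cdot u\,dx$ splits into $\Oi(u\cdot\nb)u\cdot u\,dx=0$, a $b$-linear piece absorbed into $\nu\|\nb u\|^2_{\lo}$ after choosing $\eps$ small enough, and residual pieces involving only $b$ that are treated as forcing. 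Using the hypotheses on $f$, $d$ and $d_t$, H\"older together with the embeddings $W^{s-1/p}_p(S_2)\hookrightarrow L_\infty(S_2)$ (guaranteed by the assumption $s\ge 3/p+1/3$) and the duality $W^{1/6}_{6/5}\hookrightarrow (W^{5/6}_6)^*$, the forcing terms are bounded by the right-hand side of (\ref{est-v}). Reverting via $v=u+b$ produces (\ref{est-v}).

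I then run a Galerkin scheme for the $u$-equation in a divergence-free basis adapted to the boundary conditions (normal component vanishing on all of $\pa\Om$, slip on $S_1$). The a priori estimate transfers verbatim to the approximations $u^N$, and a standard bound on $u^N_t$ in $L_{4/3}(0,T;(H^1(\Om))^*)$ obtained directly from the equation combines with Aubin--Lions compactness to yield strong convergence in $L_2(\Om^T)$ and weak convergence in $V^0_2(\Om^T)$ along a subsequence. This suffices to pass to the limit in the nonlinear term and to recover a weak solution $v$; weak continuity in $t$ and the strong attainment $v(t)\ra v(0)$ in $\lo$ follow in the standard Leray--Hopf manner.

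The principal obstacle is the first step: one must design $b$ so that simultaneously (i) the Hardy trick produces $\eps$-smallness in the convective coupling, (ii) $b$ stays solenoidal and realizes the prescribed inflow/outflow, and (iii) $b$, $b_t$, $\De b$ can be controlled by the right-hand side of (\ref{est-v}) \emph{without} any smallness assumption on $d_1, d_2$. This is precisely where the weighted Sobolev theory for the Poisson equation announced in the abstract becomes indispensable and carries the bulk of the technical work; everything else is relatively routine.
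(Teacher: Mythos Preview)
Your overall strategy matches the paper's, but there is a real gap in the extension construction and the Hardy step. You claim that after cutting off and ``adding a solenoidal correction'' you obtain a single divergence-free field $b$ supported in an $\eps$-strip near $S_2$ with $|b(x)|\le C\eps\,\sigma(x)^{-1}\|d\|$, where $\sigma=\mathrm{dist}(\cdot,S_2)$. But the solenoidal correction solves a Poisson-type problem and is generically supported on all of $\Om$; it neither stays in the strip nor inherits the Hopf pointwise bound. The paper does not attempt this: it sets $\de=b+\nb\va$ with $b=\al\,\bar e_3$ axial, Hopf-localized, and \emph{not} divergence-free, while $\nb\va$ is a global Neumann--Poisson correction; the two pieces are then estimated by different mechanisms. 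Your Hardy claim is also invalid as stated: from $u\cdot\bar n=0$ on $S$ alone one does not obtain $\int_\Om|u|^2\sigma^{-2}\,dx\le C\|\nb u\|_{L_2}^2$, since Hardy requires the full trace to vanish. In the paper the axial structure is essential: because $b=\al\,\bar e_3$, the singular portion of $\int_\Om w\cdot\nb b\cdot w\,dx$ reduces to terms of the form $\int w_3\,\eta_i'\,\d_i\,w_3\,dx$, and it is $w_3=w\cdot\bar n$ that vanishes on $S_2$, so the one-dimensional Hardy inequality applies to $w_3/\sigma_i$ only.

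For the global piece $\nb\va$ no Hardy argument is available; this is precisely where the weighted Poisson theory enters. One has $\|\nb\va\|_{L_3(\Om)}\le c\|\Div b\|_{L_{3,\mu}(\Om)}$ for $\tfrac23<\mu\le1$, and the weight $\sigma^{\mu}$ against $\Div b\sim\eps\,\d_i/\sigma_i$ makes the right-hand side small (of order $\eps\rho^{\mu-2/3}+\rho^{\mu+1/3}$), so that $\int_\Om\nb\va\cdot\nb w\cdot w\,dx$ is absorbed via H\"older against $\|w\|_{L_6}\|\nb w\|_{L_2}$. Your outline invokes the weighted Sobolev estimates only vaguely (``to control $b,b_t,\nb b,\De b$''), whereas their actual role is to handle the non-localized correction $\nb\va$ in the convective coupling; the lower-order terms in $\de$ on the right of the energy inequality are dealt with by direct computation. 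The Galerkin portion is fine; the paper argues compactness via a Friedrichs-lemma device rather than Aubin--Lions, but either route works.
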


\begin{theorem}
Assume the compatibility condition (\ref{d}). Let \mbox{$f \in
L_2(kT,(k+1)T;L_{6/5}(\Om)),$} $d_i \in L_{\infty}(R^+;
W^{s-1/p}_p(S_2))$ $ \bigcap L_2(kT,(k+1)T;W^{1/2}_2(S_2)),$ where
$\frac{3}{p} +\frac{1}{3} \le s, p>3 $ or $p=3, s> \frac{4}{3},$
and $d_{i,t} \in L_2(kT,(k+1)T;W^{1/6}_{6/5}(S_2)), i=1,2.$  Let
us assume that $$ \|v(0) \|_{L_2(\Om)} \le A
$$ for some constant $A$ and
\benn 2\int_{kT}^{(k+1)T}\|f\|_{L_{6/5}(\Om)}^2 +
\va\left(\sup_t\|d\|_{W^{s-1/p}_p(S_2)}\right)
\int_{kT}^{(k+1)T}\left(\|d\|_{W^{1/2}_2(S_2)}^2 +
\|d_t\|_{W^{1/6}_{6/5}(S_2)}^2\right)  \\ \le (1-e^{-\nu T})A^2
\eenn for $k \in \N_0,$ where $\va$ is a nonlinear positive
increasing function of its argument. Then there exists a global
weak solution $v$ to (\ref{NS}) such that \benn v\in
V^0_2(\Om\times(kT, (k+1)T)) \ \ \forall k\in \N_0=\N \cup\{0\},
\eenn and  \ben \label{est-k}\bal \|v\|_{V^0_2(\Om \times
(kT,t))}^2
 \leq 2\int_{kT}^t \|f\|^2_{L_{6/5}(\Om)}d\tau + A^2 \\ + \va\left(\sup_{\tau}
 \|d\|_{W^{s-1/p}_p(S_2)}\right)\int_{kT}^t \left(\|d\|_{W^{1/2}_2(S_2)}^2 +
\|d_t\|_{W^{1/6}_{6/5}(S_2)}^2\right)d\tau \eal \een%
for $t \in (kT, (k+1)T]. $
\end{theorem}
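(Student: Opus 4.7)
The plan is to iterate Theorem~1 on consecutive time intervals $[kT,(k+1)T]$ and use the smallness hypothesis to propagate the uniform bound $\|v(kT)\|_{L_2(\Om)}\le A$ for all $k\in\N_0$. The particular form $(1-e^{-\nu T})A^2$ of the assumption is precisely what one obtains from an exponential-decay Gronwall argument combined with a Poincar\'e inequality, which suggests the route.

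First I would reopen the derivation of (\ref{est-v}) but keep it in differential form rather than integrating directly. Testing the Navier--Stokes system against $v-V$, where $V$ is the Hopf-type extension of the inflow/outflow data already built in the proof of Theorem~1, and combining with Korn's inequality and a Poincar\'e inequality adapted to the slip condition on $S_1$ and the inhomogeneous condition on $S_2$, I expect a pointwise-in-time inequality
\benn
\frac{d}{dt}\|v\|_{L_2(\Om)}^2 + \nu\|v\|_{L_2(\Om)}^2 \le 2\|f\|_{L_{6/5}(\Om)}^2 + \widetilde G(t),
\eenn
where $\widetilde G(t)$ collects the boundary contributions already present on the right of (\ref{est-v}), i.e.\ is controlled by $\va\bigl(\sup_\tau\|d\|_{W^{s-1/p}_p(S_2)}\bigr)\bigl(\|d\|_{W^{1/2}_2(S_2)}^2+\|d_t\|_{W^{1/6}_{6/5}(S_2)}^2\bigr)$. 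Multiplying by $e^{\nu(t-kT)}$ and integrating on $[kT,t]$ gives
\benn
\|v(t)\|_{L_2(\Om)}^2 \le e^{-\nu(t-kT)}\|v(kT)\|_{L_2(\Om)}^2 + \int_{kT}^t\bigl(2\|f\|_{L_{6/5}(\Om)}^2 + \widetilde G(\tau)\bigr)d\tau.
\eenn

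Second, I would proceed by induction on $k$. The base case $k=0$ is the hypothesis $\|v(0)\|_{L_2(\Om)}\le A$. Assuming $\|v(kT)\|_{L_2(\Om)}\le A$, apply Theorem~1 on $[kT,(k+1)T]$ with initial data $v(kT)$ to obtain a weak solution on that interval and the estimate above. Evaluating at $t=(k+1)T$ and invoking the smallness assumption of Theorem~2 yields
\benn
\|v((k+1)T)\|_{L_2(\Om)}^2 \le e^{-\nu T}A^2 + (1-e^{-\nu T})A^2 = A^2,
\eenn
so the induction closes and the local pieces concatenate into a global weak solution (weak $L_2$-continuity at each node $t=kT$ coming from the strong continuity into the initial datum asserted in Theorem~1). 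The bound (\ref{est-k}) is then obtained by specializing the displayed Gronwall inequality to $t\in(kT,(k+1)T]$ and adjoining the gradient part of the $V_2^0$-norm coming from the $\|\nb v\|_{L_2}^2$-term in the differential inequality above.

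The main obstacle is step one: rebuilding the energy estimate in pointwise-in-time form with a clean coefficient $\nu$ in front of $\|v\|_{L_2(\Om)}^2$. The Hopf extension $V$ of the inhomogeneous boundary data $d_1,d_2$ contributes $V_t$-terms, convective cross-terms $v\cdot\nb V$ and $V\cdot\nb v$, and a coupling through the pressure; all of these must be absorbed into $\widetilde G$ via a weighted Poincar\'e inequality without consuming the viscosity constant, and it is exactly this accounting that forces the particular weighted norms on $d$ and $d_t$ appearing in the hypothesis. A smaller point is checking that matching at $t=kT$ is compatible with the strong $L_2$-convergence to the initial datum provided by Theorem~1, so that the concatenated function is a genuine weak solution on $(0,\infty)$.
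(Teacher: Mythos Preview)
Your proposal is correct and follows essentially the same route as the paper: derive the energy estimate in pointwise-in-time differential form via the Hopf extension, apply a Gronwall argument with the integrating factor $e^{\nu t}$ to obtain the decay bound
\[
\|v(t)\|_{L_2(\Om)}^2 \le e^{-\nu(t-kT)}\|v(kT)\|_{L_2(\Om)}^2 + \int_{kT}^{t}\bigl(2\|f\|_{L_{6/5}(\Om)}^2+\widetilde G(\tau)\bigr)\,d\tau,
\]
and then induct on $k$ using the smallness hypothesis to propagate $\|v(kT)\|_{L_2(\Om)}\le A$. The only cosmetic difference is that the paper carries out the Gronwall step for the shifted variable $w=v-\de$, reusing directly the differential inequality already obtained in the proof of Lemma~\ref{l-weak}, and only passes to $v$ at the very end; you phrase everything in terms of $v$ from the start. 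In particular there is no need to ``reopen'' the derivation: the differential inequality with the clean coefficient $\nu$ in front of $\|w\|_{L_2(\Om)}^2$ is immediate from (\ref{est-1}) by dropping the gradient part of $\|w\|_{H^1(\Om)}^2$.
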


The main step in this paper is estimate (\ref{est-1})- see
Lemma~2.2. To derive it, we use the Hopf function (see \cite{L},
\cite{G}) and estimates in weighted Sobolev spaces (see
\cite{RZ1},\cite{RZ2}.) The estimate is such that we can show
global estimate (\ref{est-k}) and prove global existence without
assumption of vanishing of the inflow-outflow and the external
force. The paper makes possible to generalize the result from
\cite{RZ3} into the inflow-outflow case.

\section{Estimates}
\setcounter{equation}{0}

To show the existence theorem, we need to obtain the energy type
estimate and for this purpose, we have to make the Neumann
boundary condition {\rm $(\ref{NS})_5$} homogeneous.

To this end, we extend functions corresponding to inflow and
outflow so that \ben \d_i|_{S_2(a_i)} = d_i,\,\, i= 1,2,\, a_1 =
-a,\, a_2= a \een We introduce the function $\eta$, see \cite{L}.
\benn \eta(\si;\eps,\rho) = \left\{ \begin{array}{lr} 1 & 0 \le
\si \le \rho e^{-1/\eps}\equiv r, \\ -\eps
\ln\dis{\frac{\si}{\rho}} & r < \si\le \rho, \\ 0 & \rho < \si <
\infty.
\end{array} \right.
\eenn

We calculate \benn \frac{d\eta}{d\si} = \eta'(\si;\eps,\rho) =
\left\{
\begin{array}{lr} 0 & 0 < \si \le r, \\
-\dis{\frac{\eps}{\si}} & r < \si\le \rho, \\ 0 & \rho < \si <
\infty.
\end{array} \right.
\eenn so that $|\eta'(\si;\eps,\rho)| \le \dis{\frac{\eps}{\si}}.$
We define functions $\eta_i$ on the neighborhood of $S_2$ (inside
$\Om$): \benn \eta_i= \eta(\si_i;\eps,\rho),\  i=1,2,\eenn where
$\si_i$ denote local coordinates defined on small neighborhood of
$S_2(a_i):$ \benn \si_1 = a+x_3, \ \si_2= a-x_3 \eenn and we set
\ben \label{al} \bal \al & = \sum_{i=1}^2 \d_i \eta_i,
\\ b & = \al \bar{e}_3, \ \bar{e}_3= (0,0,1). \eal \een
We construct function $u$ so that \ben \label{u} u= v-b. \een
Therefore, \benn \Div u & = & - \Div b = -\al_{x_3}\quad {\rm in}
\ \ \Om, \\ u\cdot \bar{n} & = & 0 \quad {\rm on} \ \ S. \eenn
Then, the boundary condition for $u$ is homogeneous. The
compatibility condition takes the form \benn \int_{\Om} \al,_{x_3}
dx = -\int_{S_2(-a)} \al|_{x_3=-a} dS_2 + \int_{S_2(a)}
\al|_{x_3=a}dS_2 = 0 \eenn We define function $\va$ as a solution
to the Neumann problem \ben \bal \label{va} \De \va & = - \Div b
\quad {\rm in} \ \ \Om, \\ \bar{n}\cdot\nb \va & = 0 \quad {\rm
on} \ \ S,\\ \int_{\Om} \va dx & = 0. \eal \een Next, we set \ben
\label{w} w = u - \nb \va = v - (b+\nb \va) \equiv v - \de. \een

Consequently, $(w,p)$ is a solution to the following problem \ben
\label{NS-w} \bal w_{t}+w\cdot\nabla w + w\cdot\nabla \de + \de
\cdot\nabla w -\Div \T(w,p) & \\  =  f - \de_t -\de \cdot\nabla
\de + \nu\Div \D(\de) = F(\de,t) \quad &{\rm in}\ \
 \Omega^T,\\
\Div w=0\quad &{\rm in}\ \ \Omega^T,\\
 w\cdot\bar n=0\quad &{\rm
on}\  S^T ,\\
\nu \bar n\cdot\D(w)\cdot\bar\tau_\alpha + \ga w \cdot
\bar\tau_{\al} & \\ = - \nu \bar n\cdot\D(\de)\cdot\bar\tau_\alpha
- \ga \de \cdot \bar\tau_{\al} = B_{1\al}(\de) ,\ \
\alpha=1,2,\quad
 &{\rm on}\ \ S_1^T,\\
  \bar n\cdot\D(w)\cdot\bar\tau_\alpha = -\bar n\cdot\D(\de)\cdot\bar\tau_\alpha= B_{2\al}(\de),\ \
\alpha=1,2,\quad
 &{\rm on}\ \ S_2^T,\\
w\big|_{t=0}=v(0) - \de(0) = w(0)\quad &{\rm in}\ \ \Omega, \eal
\een where $\Div \de =0.$ Moreover, we set \benn \bar{n}|_{S_1} =
\frac{(\va,_{x_1}, \va,_{x_2}, 0)}{\sqrt{\va,_{x_1}^2 +
\va,_{x_2}^2}},\  \bar{\tau}_1|_{S_1}= \frac{(-\va,_{x_2},
\va,_{x_1}, 0)}{\sqrt{\va,_{x_1}^2 + \va,_{x_2}^2}},\
\bar{\tau}_2|_{S_1}=(0,0,1) = \bar{e}_3, \\ \bar{n}|_{S_2(-a)} = -
\bar{e}_3, \ \bar{n}|_{S_2(a)} = \bar{e}_3,\
\bar{\tau}_1|_{S_2}=\bar{e}_1,\  \bar{\tau}_2|_{S_2}= \bar{e}_2
\eenn where $\bar{e}_1=(1,0,0), \bar{e}_2= (0,1,0).$

\noindent We define a weak solution to the problem (\ref{NS-w})
\begin{definition} \label{weak}
We call $w$ a weak solution to problem (\ref{NS-w}) if for any
sufficiently smooth function $\psi$ such that
$$\Div \psi|_{\Om} =0,\ \  \psi\cdot \bar{n}|_S =0 $$ the
integral equality \benn \int_{\Om^T} w_t\cdot\psi dx dt +
\int_{\Om^T} H(w)\cdot \psi dx dt + \nu \int_{\Om^T}
\D(v)\cdot\D(\psi) dx dt + \ga \sum_{\al=1}^2 \int_{S_1^T}
w\cdot\bar{\tau}_{\al}\psi\cdot \bar{\tau}_{\al} dS_1dt \\ -
\sum_{\al,\si=1}^2 \int_{S_{\si}^T} B_{\si\al}\psi\cdot
\bar{\tau}_{\al}dS_{\si} dt = \int_{\Om^T}F\cdot\psi dxdt \eenn
holds, where $$H(w)= w\cdot\nb w+ w\cdot \nb \de + \de\cdot \nb
w.$$
\end{definition}

\begin{lemma} \label{l-weak}
Assume the compatibility condition (\ref{d}). Assume that $f \in
L_2(0,T;L_{6/5}(\Om)),$ $d_i \in L_{\infty}(0,T; W^{s-1/p}_p(S_2))
\cap L_2(0,T;W^{1/2}_2(S_2)),$ where $\frac{3}{p} +\frac{1}{3} \le
s, p>3 $ or $p=3, s> \frac{4}{3},$ $d_{i,t} \in
L_2(0,T;W^{1/6}_{6/5}(S_2)), i=1,2,$ $w(0) \in L_2(\Om).$ Then for
a weak solution to (\ref{NS-w}) holds \ben \label{est-1} \bal &
\|w\|^2_{V^0_2(\Om^t)} + \ga \sum_{\al=1}^2 \int_0^t
\|w\cdot\bar{\tau}_{\al}\|_{L_2(S_1)}^2 \le 2
\|f\|^2_{L_2(0,t;L_{6/5}(\Om))}
\\ &
+\va(\sup_{\tau}\|d\|_{W^{s-1/p}_p(S_2)})\left(\|d\|_{L_2(0,t;W^{1/2}_2(S_2))}^2
+ \|d_t\|_{L_2(0,t;W^{1/6}_{6/5}(S_2))}^2\right)+
\|w(0)\|_{L_2(\Om)}^2 \eal \een where $t\le T, d=(d_1, d_2)$ and
$\va$ is a nonlinear positive increasing function of its argument.
\end{lemma}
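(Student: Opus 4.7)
The plan is to test the weak formulation in Definition~\ref{weak} with $\psi = w$. Since $\Div w = \Div \delta = 0$ and $w\cdot\bar n|_S = 0$, integration by parts kills both $\int_\Om(w\cdot\nabla w)\cdot w\,dx$ and $\int_\Om(\delta\cdot\nabla w)\cdot w\,dx$, while the time derivative gives $\tfrac12\tfrac{d}{dt}\|w\|^2_{L_2(\Om)}$. Using the boundary conditions of (\ref{NS-w}) in the strain term produces $\tfrac{\nu}{2}\int_\Om|\D(w)|^2\,dx + \ga\sum_\alpha\|w\cdot\bar\tau_\alpha\|^2_{L_2(S_1)}$ on the left-hand side, and Korn's inequality (under $w\cdot\bar n|_S = 0$) controls $\|\D(w)\|_{L_2}$ from below by $c\|\nabla w\|_{L_2}$. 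What remains on the right is
$$
-\int_\Om(w\cdot\nabla\delta)\cdot w\,dx + \int_\Om F\cdot w\,dx + \sum_{\sigma,\alpha}\int_{S_\sigma}B_{\sigma\alpha}\,w\cdot\bar\tau_\alpha\,dS_\sigma
$$
with $F = f - \delta_t - \delta\cdot\nabla\delta + \nu\,\Div\D(\delta)$.

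The critical term is the first one, and the Hopf cutoff (\ref{al})--(\ref{w}) is constructed precisely to make it absorbable. Splitting $\delta = b + \nabla\va$ with $b = \al\bar e_3$ and $\al = \sum_i \d_i\eta_i$, the $b$-contribution reduces to $\int_\Om w_j w_3\,\pa_j\al\,dx$; since $w\cdot\bar n|_{S_2} = 0$ gives $w_3|_{S_2} = 0$, Hardy's inequality yields $\|w_3/\si_i\|_{L_2(\supp\eta_i)} \le c\|\nabla w\|_{L_2(\Om)}$, and combining this with $|\eta_i'|\le\eps/\si_i$ and the embedding $W^{s-1/p}_p(S_2)\hookrightarrow L_\infty(S_2)$ (guaranteed by $s-3/p\ge 1/3$) one obtains
$$
\Bigl|\int_\Om(w\cdot\nabla b)\cdot w\,dx\Bigr|\le c\,\eps\,\sup_\tau\|d\|_{W^{s-1/p}_p(S_2)}\,\|\nabla w\|_{L_2(\Om)}^2.
$$
For $\eps$ chosen small relative to $\nu$ and to $\sup_\tau\|d\|_{W^{s-1/p}_p}$, this is absorbed into the viscous part. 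The $\nabla\va$ contribution is handled by appealing to the weighted Sobolev regularity of the Neumann problem (\ref{va}) from \cite{RZ1,RZ2}: since $\Div b$ inherits the $1/\si_i$ singularity of $\eta_i'$, only weighted estimates give a usable bound of $\|\nabla\va\|_{W^1_p(\Om)}$ in terms of a nonlinear function of $\sup_\tau\|d\|_{W^{s-1/p}_p(S_2)}$, after which the corresponding integral is dispatched by H\"older/Sobolev.

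The remaining pieces are standard. The forcing gives $\bigl|\int_\Om f\cdot w\,dx\bigr|\le\|f\|_{L_{6/5}}\|w\|_{L_6}\le c\|f\|_{L_{6/5}}\|\nabla w\|_{L_2}$, yielding the $2\|f\|^2_{L_2(L_{6/5})}$ contribution after absorbing half of $\|\nabla w\|^2_{L_2}$ in the viscous term and integrating in time. The terms in $F$ involving $\delta_t$, $\delta\cdot\nabla\delta$, $\Div\D(\delta)$ and the boundary pieces $B_{\sigma\alpha}$ are all estimated by combining the weighted Neumann bounds for $\va$ with standard trace and dual-trace theorems, producing the quantity $\va\bigl(\sup_\tau\|d\|_{W^{s-1/p}_p}\bigr)\bigl(\|d\|^2_{L_2(W^{1/2}_2)} + \|d_t\|^2_{L_2(W^{1/6}_{6/5})}\bigr)$. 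Integrating in $t$ and passing to the essential supremum yields (\ref{est-1}). The main obstacle is the simultaneous tuning of the Hopf parameter $\eps$ and the weighted Sobolev regularity of $\va$: $\eps$ must be small relative to $\nu$ and to the norm of $\d$, while the $1/\si$ singularity of $\Div b$ mandates the weighted estimates of \cite{RZ1,RZ2}, and their interplay is precisely what forces $\va$ to depend nonlinearly on $\sup_\tau\|d\|_{W^{s-1/p}_p(S_2)}$.
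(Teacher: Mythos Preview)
There is a genuine gap. You claim that integration by parts kills $\int_\Om(\delta\cdot\nabla w)\cdot w\,dx$ using $\Div\delta=0$, but the boundary term $\tfrac12\int_S(\delta\cdot\bar n)\,|w|^2\,dS$ does \emph{not} vanish. While $\nabla\va\cdot\bar n|_S=0$ by the Neumann condition and $b\cdot\bar n|_{S_1}=0$, on $S_2$ one has $\delta\cdot\bar n=b\cdot\bar n=\pm d_i$, and only $w_3=w\cdot\bar n$ vanishes there --- the tangential components $w_1,w_2$ do not. The inflow contribution $-\tfrac12\int_{S_2(-a)}d_1(w_1^2+w_2^2)\,dS$ has the wrong sign, and a trace estimate bounds it only by $c\,\|d_1\|_{L_\infty(S_2)}\|w\|_{H^1(\Om)}^2$, which is not absorbable for large $d$ --- precisely the regime the lemma is meant to cover.

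The paper accordingly never integrates this term by parts. It estimates the volume integral $\int_\Om\delta\cdot\nabla w\cdot w\,dx$ directly, splitting it as $I_1+I_2$ along $\delta=b+\nabla\va$: for $I_1$ it uses that $\supp b$ lies in a strip of width $\rho$, so $\|b\|_{L_3(\Om)}\le c\rho^{1/6}\|\d\|_{H^1}$ and H\"older gives $|I_1|\le c\rho^{1/6}\|\d\|_{H^1}\|w\|_{H^1}^2$; for $I_2$ it invokes the weighted bound $\|\nabla\va\|_{L_3}\le c\|\Div b\|_{L_{3,\mu}}$ with $\mu>2/3$ from \cite{RZ2}, producing factors $\eps\rho^{\mu-2/3}$ and $\rho^{\mu+1/3}$. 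Absorption therefore requires tuning \emph{both} Hopf parameters, taking $\eps=\rho^{1/6}=\nu/(15\|\d\|_{W^s_p})$, whereas your sketch tunes only $\eps$. Your handling of $\int_\Om(w\cdot\nabla\delta)\cdot w$ (the paper's $I_3,I_4$) is along the right lines; what is missing is the parallel analysis of $I_1,I_2$ and the two-parameter choice that makes all four pieces simultaneously small.
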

\begin{proof}
We use $\psi=w$ as a test function in a weak solution definition
and apply definition of $F$ to obtain \benn
\frac{1}{2}\frac{d}{dt} \|w\|_{L_2(\Om)}^2 + \int_{\Om}(w\cdot\nb
\de\cdot w +\de\cdot \nb w\cdot w)dx - \int_{\Om}\Div \T(w+\de,
p)\cdot w dx \\ = \int_{\Om}(f -\de_t-\de\cdot \nb\de)\cdot w dx
\eenn

We use boundary conditions (\ref{NS}) on $S_1$ and on $S_2$ to
reformulate the third integral on the l.h.s. of the above
inequality  as follows \benn \int_{\Om} \Div \T(w+\de,p)\cdot w dx=
\Oi \Div[\nu\D(w+\de) - p\I]\cdot w dx \\ = \Oi
\Div[\nu\D(w+\de)]\cdot w dx - \Oi p\cdot\nb w =
\Oi D_{ij}(w+\de)w_{j,x_i}dx \\ = \Oi D_{ij}(w) w_{j,x_i}dx + \Oi D_{ij}(\de) w_{j,x_i}dx
=\frac{1}{2} \Oi |D_{ij}(w)|^2dx + \Oi D_{ij}(\de) w_{j,x_i} dx\eenn
%\int_S \nu \bar{n}\cdot\D(w+\de)\cdot w - \nu\Oi \D(w+\de)\cdot \D(w)\\
%= - \nu\Oi \D(w+\de)\cdot
%\D(w)-\ga\int_{S_1}(w+\de)\cdot\bar{\tau}_{\al} w
%\cdot\bar{\tau}_{\al} dS_1 \eenn
Then, we apply Korn inequality to
have the estimate \ben \label{ineq} \bal & \frac{1}{2}\frac{d}{dt}
\|w\|_{L_2(\Om)}^2 + \nu \|w\|_{H^1(\Om)}^2+ \ga
\sum_{\al=1}^2\|w\cdot\bar{\tau}_{\al}\|_{L_2(S_1)}^2 \\ & \le -
\int_{\Om}(w\cdot\nb \de\cdot w +\de\cdot \nb w\cdot w)dx +
c\sum_{\al=1}^2\|\de\cdot\bar{\tau}_{\al}\|_{L_2(S_1)}^2+
c\|\D(\de)\|^2_{L_2(\Om)} \\& +\int_{\Om}(f -\de_t-\de\cdot
\nb\de)w dx \eal \een Next, we focus on the integral \benn \Oi
\de\cdot \nb w\cdot w dx & = & \Oi (b+\nb \va)\cdot \nb w\cdot w
dx = \Oi b\cdot \nb w\cdot w dx +\Oi \nb \va \cdot \nb w\cdot w dx
\\ & = & I_1 + I_2 \eenn We can estimate $I_1$ by H\"{o}lder
inequality and definition of $b$ \benn |I_1| \le \|\nb
w\|_{L_2(\Om)}\|w\|_{L_6(\Om)} \|b\|_{L_3(\Om)} \le
c\|w\|^2_{H^1(\Om)}\|b\|_{L_3(\widetilde{S}_2(\rho))} \\ \le c
\rho^{1/6} \|w\|^2_{H^1(\Om)}\|b\|_{L_6(\widetilde{S}_2(\rho))}
\le c \rho^{1/6} \|w\|^2_{H^1(\Om)}\|\de \|_{L_6(\Om)} \le c
\rho^{1/6}\|w\|^2_{H^1(\Om)}\|\d\|_{H^1(\Om)} \eenn where \benn
\widetilde{S}_2(\rho) = \{x\in \Om: x_3\in(-a,-a+\rho) \cup
(a-\rho,a) \} = \widetilde{S}_2(\rho, a_1)
\cup\widetilde{S}_2(\rho, a_2). \eenn We estimate $I_2$ as follows
\ben \label{I2} I_2= \Oi\nb\va\cdot\nb w\cdot w dx \le \|\nb
\va\|_{L_3(\Om)}\|w\|_{L_6(\Om)}\|\nb w\|_{L_2(\Om)} \een where
\benn \|\nb \va\|_{L_3(\Om)} & \le & c\|\nb
\va\|_{L_{3,-\mu'}(\Om)} \le c\|\nb_{x_3}\!\nb
\va\|_{L_{3,1-\mu'}(\Om)}\le c\|\va\|_{L^2_{3,1-\mu'}(\Om)}\\ &
\le & c\|\Div b\|_{L_{3, 1-\mu'}(\Om)} \eenn and we denote \benn
\|u\|_{L^k_{p,\mu}(\Om)}=(\sum_{|\al|=k} \int|D^{\al}_xu|^p{\rm
min_{i=1,2} |(dist}(x,S_2(a_i))|^{p\mu}dx)^{1/p}, \mu\in\R, p\in
(1,\infty). \eenn To estimate the last norm, we have used the
result of \cite{RZ2} on Poisson equation in weighted Sobolev
spaces and choose $\dis{\frac{2}{3}} \le 1-\mu' \le 1.$ With $\mu
= 1-\mu'$ we have \benn  c \|\Div b\|_{L_{3,\mu}(\Om)} \le
 c \eps \left(\sum_{i=1}^2
\int_{\widetilde{S}_2(a_i)}|\d_i|^3\frac{\si^{3\mu}_i}{\si_i^3}dx\right)^{1/3}
+ \left(\sum_{i=1}^2 \int_{\widetilde{S}_2(a_i)}|\d_{i,x_3}|^3
|\rho(x)|^{3\mu} dx\right)^{1/3} \\ \le
 c\sum_{i=1}^2
\eps\left(\sup_{x_3}\int_{S_2(a_i)}|\d_i|^3
dx'\int_r^{\rho}\frac{\si^{3\mu}_i}{\si_i^3}d\si_i\right)^{1/3} +
\sum_{i=1}^2\left(\sup_{x_3} \int_{S_2(a_i)}|\d_{i,x_3}|^3 dx'
\int_0^{\rho}\si_i^{3\mu} d\si_i\right)^{1/3}
\\  \le  c\eps \rho^{\mu-2/3}\sup_{x_3}\|\d\|_{L_{3}(S_2)} + c \rho^{\mu
+1/3}\sup_{x_3}\|\d,_{x_3}\|_{L_{3}(S_2)}  \eenn where $\si_i =
{\rm dist}\{S_2(a_i), x\}, x\in S_2(a_i, \rho). $ We note, that
the last bound holds for $\mu >\frac{2}{3}$ since for
$\mu=\frac{2}{3}$ the r.h.s. takes the form \benn
c\sup_{x_3}\|\d\|_{L_{3}(S_2)} + c \rho
\sup_{x_3}\|\d,_{x_3}\|_{L_{3}(S_2)}, \eenn which can not be made small for large $\d.$  Then, \benn I_2 \le c
\left[\eps \rho^{\mu-2/3}\sup_{x_3}\|\d\|_{L_{3}(S_2)} + \rho^{\mu
+1/3}\sup_{x_3}\|\d,_{x_3}\|_{L_{3}(S_2)}\right]
\|w\|^2_{H^1(\Om)} \eenn Next, we consider the term \benn
\Oi(w\cdot\nb \de \cdot w)dx = \Oi(w\cdot\nb b \cdot w)dx +
\Oi(w\cdot\nb \nb \va \cdot w)dx = I_3 +I_4. \eenn For $I_4,$ we
have \benn |I_4| \le \left|\Oi\Div(w\cdot\nb\va\cdot w)dx - \Oi
(w\cdot\nb w \cdot \nb \va)dx\right| \\ \le \int_S|\bar{n}\cdot
\nb\va\cdot w^2|dS + \Oi|\nb\va\cdot (w\cdot\nb w)|dx \le
\Oi|\nb\va\cdot (w\cdot\nb w)|dx \eenn so $I_4$ can be treated in
the same way as $I_2$ and therefore \ben \label{I4} \ \ \ |I_4|
\le c \left[\eps \rho^{\mu-2/3}\sup_{x_3}\|\d\|_{L_{3}(S_2)} +
\rho^{\mu +1/3}\sup_{x_3}\|\d,_{x_3}\|_{L_{3}(S_2)}\right]
\|w\|^2_{H^1(\Om)}.  \een On the other hand, using
$b=\al\bar{e_3}= \sum_{i=1}^2\d_i\eta_i\bar{e_3},$ we find the
bound for $I_3$ \benn |I_3| & \le & |\sum_{i=1}^2
\int_{\widetilde{S}_2(\rho, a_i)} w\cdot \nb(\d_i\eta_i)w_3 dx| \\
& \le & |\sum_{i=1}^2 \int_{\widetilde{S}_2(\rho, a_i)}
(w\cdot\nb\d_i\eta_iw_3 + w\cdot\nb\eta_i\d_iw_3) dx| \\ & \le &
\sum_{i=1}^2 \left(\int_{\widetilde{S}_2(\rho, a_i)}
|w\cdot\nb\d_i\eta_i| |w_3|dx + \int_{\widetilde{S}_2(\rho, a_i)} \eps\left|\frac{w_3}{\si_i} w_3
\d_i\right| d\si_i dx_1dx_2\right) \\ & \le & c \sum_{i=1}^2
\|w\|_{L_6(\widetilde{S}_2(\rho,
a_i))}\|w_3\|_{L_3(\widetilde{S}_2(\rho,
a_i))}\|\nb\d_i\|_{L_2(\widetilde{S}_2(\rho, a_i))}
\\ & & +c\eps
\sum_{i=1}^2\|w_3\|_{L_6(\widetilde{S}_2(\rho,
a_i))}\|\d_i\|_{L_3(\widetilde{S}_2(\rho, a_i))}
\left(\int_{\widetilde{S}_2(\rho, a_i)}dx_1dx_2\int^{\rho}_r
d\si_i\left|\frac{w_3}{\si_i}\right|^2 \right)^{1/2} \\ & \le &
c\rho^{1/6}\sum_{i=1}^2 \|w\|^2_{L_6(\widetilde{S}_2(\rho,
a_i))}\|\nb\d_i\|_{L_2(\widetilde{S}_2(\rho, a_i))}
\\ & & +c\eps\sum_{i=1}^2\|w\|_{L_6(\widetilde{S}_2(\rho,
a_i))}\|\nb w_3\|_{L_2(\widetilde{S}_2(\rho,
a_i))}\|\d_i\|_{L_3(\widetilde{S}_2(\rho, a_i))} \\ & \le &
c(\rho^{1/6} + \eps)\|w\|^2_{H^1(\Om)}\|\d\|_{W^1_3(\Om)}. \eenn
Thus, we can summarize estimates for $I_1-I_4$ to conclude that
nonlinear term in (\ref{ineq}) is bounded by \ben \bal
\label{w-de-w} & \left|\int_{\Om}(w\cdot\nb \de\cdot w +\de\cdot
\nb w\cdot w)dx\right| \\ & \le c \|w\|^2_{H^1(\Om)} \left(
 \eps
\rho^{\mu-2/3}\sup_{x_3}\|\d\|_{L_{3}(S_2)} + \rho^{\mu
+1/3}\sup_{x_3} \|\d,_{x_3}\|_{L_{3}(S_2)}\right. \\ & \left. +(\rho^{1/6} +
\eps)\|\d\|_{W^1_3(\Om)}+ \rho^{1/6}\|\d\|_{H^1(\Om)}\right).
 \eal \een  Next, we examine the second term on the r.h.s. of
(\ref{ineq}): \benn
\sum_{\al=1}^2\|\de\cdot\bar{\tau}_{\al}\|_{L_2(S_1)}^2 & \le &
\sum_{\al=1}^2(\|b\cdot\bar{\tau}_{\al}\|_{L_2(S_1)}^2 + \|\nb
\va\cdot\bar{\tau}_{\al}\|_{L_2(S_1)}^2) \\ & \le &
\|\al\|^2_{L_2(S_1)} + c\|\nb\va \|^2_{W^1_{3/2}(\Om)} \\ & \le &
\sum_{i=1}^2\|d_i\|_{L_2(S_1)}^2 + c\|\Div b \|^2_{L_{3/2}(\Om)}
\\ & \le & c \|\d\|^2_{W^1_{3/2}(\Om)} +c
\sum_{i=1}^2\|\nb(\d_i\eta_i)\|_{L_{3/2}(\Om)}^2 \\ & \le & c
\|\d\|^2_{W^1_{3/2}(\Om)} +c
\sum_{i=1}^2\left(\|\nb\d_i\eta_i\|_{L_{3/2}(\Om)}^2
+\|\d_i\nb\eta_i\|_{L_{3/2}(\Om)}^2\right) \\ & \le & c
\|\d\|^2_{W^1_{3/2}(\Om)} +c \sum_{i=1}^2
\|\d_i\nb\eta_i\|_{L_{3/2}(\Om)}^2.
 \eenn
The last expression we calculate in details: \benn \sum_{i=1}^2
\|\d_i\nb\eta_i\|_{L_{3/2}(\Om)}^2 \le
\eps^2\left[\left(\int_{-a+r}^{-a+\rho}dx_3\int_{S_2(a_1)}dx'
\left|\frac{\d_1}{a+x_3}\right|^{3/2}\right)^{4/3} \right.\\
\left. + \left(\int^{a-r}_{a-\rho}dx_3\int_{S_2(a_2)}dx'
\left|\frac{\d_2}{a-x_3}\right|^{3/2}\right)^{4/3}\right]\\ \le
\eps^2 \left[\sup_{x_3}
\|\d_1\|^2_{L_{3/2}(S_2(a_1))}\left(\int_{-a+r}^{-a+\rho}
\left|\frac{1}{a+x_3}\right|^{3/2}dx_3\right)^{4/3} \right. \\
\left.
 + \sup_{x_3} \|\d_2\|^2_{L_{3/2}(S_2(a_2))}
\left(\int^{a-r}_{a-\rho}
\left|\frac{1}{a-x_3}\right|^{3/2}dx_3\right)^{4/3}\right] \\
\le c\eps^2 \sup_{x_3}
\|\d\|_{L_{3/2}(S_2)}^2\left(\int_r^{\rho}\frac{dy}{y^{3/2}}\right)^{4/3}
\le c\eps^2 \sup_{x_3}
\|\d\|_{L_{3/2}(S_2)}^2\left[\frac{1}{r^{1/2}}-\frac{1}{\rho^{1/2}}
\right]^{4/3} \\ \le c\eps^2 \sup_{x_3}
\|\d\|^2_{L_{3/2}(S_2)}\frac{1}{\rho^{2/3}}[e^{1/2\eps} - 1]^{4/3}
\le c \frac{\eps^2}{\rho^{2/3}} e^{2/3\eps} \sup_{x_3}
\|\d\|_{L_{3/2}(S_2)}^2.
 \eenn
Combining inequalities above, we infer \benn
\sum_{\al=1}^2\|\de\cdot\bar{\tau}_{\al}\|_{L_2(S_1)}^2 \le c
\|\d\|^2_{W^1_{3/2}(\Om)} + c \frac{\eps^2}{\rho^{2/3}}
e^{2/3\eps} \sup_{x_3} \|\d\|_{L_{3/2}(S_2)}^2 \eenn We estimate
also the term \benn \|\D(\de)\|^2_{L_2(\Om)} & \le &
\|\D(b)\|^2_{L_2(\Om)} + \|\D(\nb\va)\|^2_{L_2(\Om)}\\ & \le &
\sum_{i=1}^2 \left(\|\nb\d_i\eta_i\|^2_{L_2(\Om)}+ \|\d_i\nb
\eta_i\|^2_{L_2(\Om)}\right) + \|\nb^2\va\|_{L_2(\Om)}^2 \\ & \le
& \sum_{i=1}^2 \left(\|\nb\d_i\eta_i\|^2_{L_2(\Om)}+ \|\d_i\nb
\eta_i\|^2_{L_2(\Om)}\right) + \|\Div b\|_{L_2(\Om)}^2 \\ & \le &
c\sum_{i=1}^2 \left(\|\nb\d_i\eta_i\|^2_{L_2(\Om)}+ \|\d_i\nb
\eta_i\|^2_{L_2(\Om)}\right) \\ & \le &
c\sum_{i=1}^2\|\d_i\|^2_{W^1_2(\Om)} + \eps^2
c\int_{-a+r}^{-a+\rho}dx_3\int_{S_2(a_1)}dx'
\left|\frac{\d_1}{a+x_3}\right|^{2} \\ & & + \eps^2
\int^{a-r}_{a-\rho}dx_3\int_{S_2(a_2)}dx'
\left|\frac{\d_2}{a-x_3}\right|^{2} \eenn \benn & \le & c
\sum_{i=1}^2\left(\|\d_i\|^2_{W^1_2(\Om)}+ \eps^2 \sup_{x_3}
\|\d_i\|^2_{L_{2}(S_2)}\int_r^{\rho}\frac{dy}{y^2}\right) \\ & \le
& c \sum_{i=1}^2\left[\|\d_i\|^2_{W^1_2(\Om)}+ \eps^2 \sup_{x_3}
\|\d_i\|^2_{L_{2}(S_2)}\left(\frac{1}{r}- \frac{1}{\rho} \right)
\right]  \\ & \le & c \sum_{i=1}^2\left[\|\d_i\|^2_{W^1_2(\Om)}+
\eps^2 \sup_{x_3} \|\d_i\|^2_{L_{2}(S_2)}\frac{1}{\rho}(e^{1/\eps}
-1) \right] \\ & \le & c
\sum_{i=1}^2\left[\|\d_i\|^2_{W^1_2(\Om)}+ \frac{\eps^2}{\rho}
e^{1/\eps} \sup_{x_3} \|\d_i\|^2_{L_{2}(S_2)}\right]. \eenn

Analyzing the last integral on the r.h.s. of (\ref{ineq}) we have
\benn \int_{\Om}(f  -\de_t -\de\cdot \nb\de)w dx &  \le &
\eps_1\|w\|_{L_6(\Om)}^2 +c(1/\eps_1) (\|f\|_{L_{6/5}(\Om)}^2  +
\|\de_t\|_{L_{6/5}(\Om)}^2) \\ && + \left|\int_{\Om} \de\cdot\nb
\de\cdot wdx\right| \eenn%
 We estimate $\|\de_t\|_{L_{6/5}(\Om)}$
as follows \benn \|\de_t\|_{L_{6/5}(\Om)}& = & \|b_t +
\nb\va_t\|_{L_{6/5}(\Om)} \le \|\d_t\|_{L_{6/5}(\Om)}+ \|\Div
b_t\|_{L_{6/5}(\Om)} \\ & \le & \|\d_t\|_{L_{6/5}(\Om)}+ \|\nb
\d_t\|_{L_{6/5}(\Om)}+ \|\d_t\nb \eta \|_{L_{6/5}(\Om)}
 \\ & \le & \|\d_t\|_{W^1_{6/5}(\Om)} +  \eps
 \sup_{x_3}\|\d_t\|_{L_{6/5}(S_2)}\left(\int_r^{\rho}\frac{dx_3}{x_3^{6/5}}\right)^{5/6}
\\ & \le & \|\d_t\|_{W^1_{6/5}(\Om)} +  \eps \frac{1}{\rho^{1/6}}
 e^{1/6\eps}
 \sup_{x_3}\|\d_t\|_{L_{6/5}(S_2)} \eenn
 since \benn
 \left(\int_r^{\rho}\frac{dx_3}{x_3^{6/5}}\right)^{5/6}=
 \left(\frac{1}{r^{1/5}} - \frac{1}{\rho^{1/5}}\right)^{5/6} =
 \frac{1}{\rho^{1/6}}\left(e^{1/5\eps} -1\right)^{5/6} \eenn
Finally, we examine \benn \left|\int_{\Om} \de\cdot\nb \de\cdot
wdx\right| \le \|\nb \de\|_{L_2(\Om)}\|w\|_{L_6(\Om)}
\|\de\|_{L_3(\Om)}\le  \eps_2 \|w\|_{L_6(\Om)}^2 + c(1/\eps_2) \|
\de\|_{W^1_2(\Om)}^4 \\ \le \eps_2 \|w\|_{L_6(\Om)}^2 +
c(1/\eps_2)\left(\|\d\|_{W^1_2(\Om)}^4 + \frac{\eps^4}{\rho^2}
e^{2/\eps}\sup_{x_3}\|\d\|_{L_2(S_2)}^4\right)\eenn

We summarize above estimates to rewrite (\ref{ineq}) as follows
\ben \label{est} \bal & \frac{1}{2}\frac{d}{dt} \|w\|_{L_2(\Om)}^2
+ \nu \|w\|_{H^1(\Om)}^2+ \ga
\sum_{\al=1}^2\|w\cdot\bar{\tau}_{\al}\|_{L_2(S_1)}^2 \\ & \le
\|w\|_{H^1(\Om)}^2 \left[ \eps
\rho^{\mu-2/3}\sup_{x_3}\|\d\|_{L_{3}(S_2)} + \rho^{\mu
+1/3}\sup_{x_3}\|\d,_{x_3}\|_{L_{3}(S_2)}\right. \\ & \left.
+(\rho^{1/6} + \eps)\|\d\|_{W^1_3(\Om)}+
\rho^{1/6}\|\d\|_{H^1(\Om)}+ \eps_1 +\eps_2 \right] \\ & +
\|f\|_{L_{6/5}(\Om)}^2 + \|\d\|^2_{L_2(\Om)} +
\|\d\|^4_{W_2^1(\Om)}+ \|\d\|^2_{W_2^1(\Om)}
\\ & + \|\nb\d\|^2_{L_{6/5}(\Om)} + \|\nb \d\|_{L_2(\Om)}^4 +
\|\d\|^2_{W^1_{3/2}(\Om)} + \|\d_t\|^2_{W^1_{6/5}(\Om)} \\ & +
\frac{\eps^2}{\rho} e^{1/\eps} \sup_{x_3} \|\d\|^2_{L_{2}(S_2)} +
\frac{\eps^4}{\rho^2} e^{2/\eps}\sup_{x_3}\|\d\|_{L_2(S_2)}^4
\\ & +\frac{\eps^2}{\rho^{2/3}} e^{2/3\eps} \sup_{x_3}
\|\d\|_{L_{3/2}(S_2)}^2  + \frac{\eps^2}{\rho^{1/3}}
 e^{1/3\eps}
 \sup_{x_3}\|\d_t\|^2_{L_{6/5}(S_2)} \eal \een
We apply Sobolev anisotropic imbedding (see \cite{BIN}, Ch.3,
Section 10) to estimate $\sup_{x_3}\|\d\|_{L_{3}(S_2)}$ and
$\sup_{x_3}\|\d,_{x_3}\|_{L_{3}(S_2)}$ with some $W^s_p$ norm and
calculate \benn 2\left(\frac{1}{p} - \frac{1}{3}\right)\frac{1}{s}
+ \frac{1}{p}\cdot\frac{1}{s} + \frac{1}{s} \le 1 \quad {\rm for}
\quad p>3 \\
2\left(\frac{1}{p} - \frac{1}{3}\right)\frac{1}{s} +
\frac{1}{p}\cdot\frac{1}{s} + \frac{1}{s} < 1 \quad {\rm for}
\quad p=3 \eenn Then, \ben \label{s-p} \frac{3}{p} +\frac{1}{3}
\le s \quad {\rm for}\quad p>3 \quad {\rm or} \quad p=3, \,
s>\frac{4}{3}. \een We set $\mu > \frac{2}{3},$ then since $\rho
<1,$ we observe that $ \rho^{\mu+1/3} \le \rho^{1/6}.$ Then \benn
\eps \rho^{\mu-2/3}\sup_{x_3}\|\d\|_{L_{3}(S_2)}  +  \rho^{\mu
+1/3}\sup_{x_3}\|\nb\d\|_{L_{3}(S_2)} +(\rho^{1/6} +
\eps)\|\d\|_{W^1_3(\Om)}  +  \rho^{1/6}\|\d\|_{H^1(\Om)} \\
\le (\eps\rho^{\mu-2/3} + \rho^{\mu +1/3} + 2 \rho^{1/6}
+\eps)\|\d\|_{W^s_p(\Om)} \le (2\eps +
3\rho^{1/6})\|\d\|_{W^s_p(\Om)} \eenn
 We put \ben
\label{eps} \bal \eps & = \frac{\nu}{15\|\d\|_{W^s_p(\Om)}},\\
\rho^{1/6} & = \frac{\nu}{15\|\d\|_{W^s_p(\Om)}},\\ \eps_1+\eps_2
& = \frac{\nu}{6},  \eal \een with $p, s$ satisfying (\ref{s-p}).
Therefore, \benn \eps \rho^{\mu-2/3}\sup_{x_3}\|\d\|_{L_{3}(S_2)}
+ \rho^{\mu +1/3}\sup_{x_3}\|\nb\d\|_{L_{3}(S_2)} \\ +(\rho^{1/6}
+ \eps)\|\d\|_{W^1_3(\Om)}  +  \rho^{1/6}\|\d\|_{H^1(\Om)} +
\eps_1 + \eps_2  \le \frac{\nu}{2} \eenn and formula (\ref{est})
assumes the form \benn  \frac{d}{dt} \|w\|_{L_2(\Om)}^2 & + & \nu
\|w\|_{H^1(\Om)}^2+ \ga
\sum_{\al=1}^2\|w\cdot\bar{\tau}_{\al}\|_{L_2(S_1)}^2 \\ & \le &
2\|f\|_{L_{6/5}(\Om)}^2 + \va(\|\d\|_{W^1_2(\Om)})
(\|\d\|_{W^1_2(\Om)}^2 + \|\d_t\|_{W^1_{6/5}(\Om)}^2) \\ & & +
\va(\|\d\|_{W^s_p(\Om)})\left(\sup_{x_3}
\|\d\|^2_{L_{2}(S_2)}+\sup_{x_3}\|\d_t\|^2_{L_{6/5}(S_2)}\right)
 \eenn where $\va$ is a nonlinear positive increasing function of
 its argument.
We use Sobolev imbedding  \benn
\sup_{x_3} \|\d\|_{L_{2}(S_2)}\le c \|\d\|_{W^1_2(\Om)}, \\
 \sup_{x_3}\|\d_t\|_{L_{6/5}(S_2)} \le c \|\d_t\|_{W^1_{6/5}(\Om)}
\eenn and hence \ben \label{est-1} \bal & \frac{d}{dt}
\|w\|_{L_2(\Om)}^2 + \nu \|w\|_{H^1(\Om)}^2+ \ga
\sum_{\al=1}^2\|w\cdot\bar{\tau}_{\al}\|_{L_2(S_1)}^2 \\ & \le
2\|f\|_{L_{6/5}(\Om)}^2 + \va(\|\d\|_{W^s_p(\Om)})
\left(\|\d\|_{W^1_2(\Om)}^2 + \|\d_t\|_{W^1_{6/5}(\Om)}^2\right)
\eal \een Integrating (\ref{est-1}) with respect to time we obtain
\ben \label{est-t} \bal & \|w\|^2_{V^0_2(\Om^t)} + \ga
\sum_{\al=1}^2 \int_0^t \|w\cdot\bar{\tau}_{\al}\|_{L_2(S_1)}^2 dt
\le 2 \|f\|^2_{L_2(0,t;L_{6/5}(\Om))}
\\ &
+\va(\sup_{\tau}\|\d\|_{W^s_p(\Om)})\left(\|\d\|_{L_2(0,t;W^1_2(\Om))}^2
+ \|\d_t\|_{L_2(0,t;W^1_{6/5}(\Om))}^2\right)+
\|w(0)\|_{L_2(\Om)}^2, \eal \een where $\frac{3}{p} +\frac{1}{3}
\le s, p>3 $ or $p=3, s>\frac{4}{3}.$

\end{proof}

\section{Weak solutions to (\ref{NS-w})}
\setcounter{equation}{0}

In this section, we use the Galerkin method to prove the existence
of weak solutions to the problem (\ref{NS-w}). We follow ideas
from [L], chapter 6, section 7. Namely, we introduce the
sequence of approximating functions $w_N$ given as %
\benn w^N(x,t) = \sum_{k=1}^N C_{kN}(t) a^k(x), \eenn where
$\{a^k\}_{k=1}^{\infty}$ is the system of orthogonal functions in
$L^2(\Om)\bigcap J_2^0(\Om).$ Here, $J^0_2(\Om)= \{f\in H^1(\Om) :
{\rm div} f=0\}$ and $\{a^k\}_{k=1}^{\infty}$ is the fundamental
system in $H^1(\Om)$ with $\sup_{x\in \Om}|a^k(x)| <\infty,
\sup_{x\in \pa\Om} |a^k(x)|<\infty.$ The coefficients $C_{kN}(0)$
are defined by \benn C_{kN}|_{t=0} = (w_0, a_k), \quad
k=1,\ldots,N, \eenn and the
function $w^N$ satisfy the following system with test functions $a^k$:%
\benn%
 & &  \left\{ \Oi \left( \frac{1}{2}\frac{d}{dt} w^N a^k + w^N\cdot\nb w^N a^k+ \de
\cdot \nb w^N \cdot a^k + w^N \cdot\nb\de\cdot a^k +\nu
\mathbb{D}(w^N)\mathbb{D}(a^k)\right)dx \right.
\\& & \left.+ \ga \int_{S_1} w^N \cdot\bar{\tau}_j a^k \bar{\tau}_j dS_1 \right \}=
\left(\sum_{j,\si=1}^2 \int_{S_{\si}}B_{\si j}a^k
\cdot\bar{\tau}_j\,
dS_{\si} + \Oi  F\cdot a^k dx \right)\eenn %
for $k=1,\ldots,N.$ Then, $w^N$ would be the weak solution to
(\ref{NS-w}).

 With $(f,g)= \int_{\Om} f g dx $ and
$(f,g)_{S}= \int_{S} fg dS$ this can be rewritten as:
\benn%
& &  \biggl\{(w^N_{t},a^k) + (w^N \cdot\nb w^N,a^k) +(\de \cdot
\nb w^N,a^k) + (w^N\cdot\nb\de,a^k)
\\ & & +\nu (\mathbb{D}(w^N),\mathbb{D}(a^k))
 + \ga (w^N\cdot\bar{\tau}_j,a^k\cdot
\bar{\tau}_j)_{S_1}\biggr\} = \\ & & \left[\sum_{\si,j=1}^2
(B_{\si j},a^k \cdot\bar{\tau}_j)_{S_{\si}} + (F,a^k)\right],
\quad k=1,\ldots,N.
\eenn%
Thus,%
\ben \label{gal}\bal%
 \left(\frac{d}{dt}w^N,a^k\right) & +  (w^N \cdot\nb w^N,a^k)
+(\de \cdot \nb w^N,a^k) + (w^N\cdot\nb\de,a^k) \\ &   + \nu
(\mathbb{D}(w^N),\mathbb{D}(a^k)) + \ga
(w^N\cdot\bar{\tau}_j,a^k\cdot \bar{\tau}_j)_{S_1}\\& =
\sum_{j,\si=1}^2 (B_{\si j},a^k \cdot\bar{\tau}_j)_{S_{\si}} +
(F,a^k), \quad k=1,\ldots,N. \eal \een The above equations are in
fact a system of ordinary differential equations for the functions
$C_{kN}(t).$ The properties
of the sequence $a^k$ imply%
\benn |w^N(x,t)|_{2,\Om}^2 = \sum_{k=1}^N C_{kN}^2(t). \eenn %
On the other hand, we can obtain the a priori bounds for the
approximative
solutions $w^N$ of the same form as (\ref{est-t}): %
\ben \label{wN-apriori} \bal  |w^N|_{V^0_2(\Om^T)}^2 & =
\sup_{0\leq t\leq T}|w^N|_{2,\Om} + \int_0^T |\nb w^N|_{2,\Om}dt \\
& \leq \int_0^T \|f\|^2_{L_{6/5}(\Om)} + \va(\sup_{0\leq t\leq T}
\|\d\|_{W^s_p(\Om)})\int_0^T\left(\|\d\|_{W^1_2(\Om)}^2 +
\|\d_t\|_{W^1_{6/5}(\Om)}^2\right)dt \\ & +
\|w^N(0)\|_{L_2(\Om)}^2 \leq C, \eal \een where $\frac{3}{p}
+\frac{1}{3} \le s, p>3 $ or $p=3, s>\frac{4}{3}.$ Therefore,
$\sup_{0\leq t\leq T} |C_{kN}(t)|$ is bounded on $[0,T]$ and $w^N$
are well defined for all times $t.$

 Let us define now
$\psi_{N,k} \equiv (w^N(x,t),a^k(x)).$ This sequence is uniformly
bounded by (\ref{wN-apriori}). We can also show that it is
equicontinuous. Namely, we integrate (\ref{gal}) with respect
to $t$ from $t$ to $t+\De t$ to obtain%
\benn & & |\psi_{N,k}(t+\De t) - \psi_{N,k}(t)| \leq
\sup_{x\in\Om} |a^k(x)| \int_t^{t+\De t}\left(|w^N \cdot\nb
w^N|_{2,\Om} +|\de \cdot \nb w^N|_{2,\Om}\right. \\ & & \left. +
|w^N\cdot\nb\de|_{2,\Om}+ |F|_{2,\Om}\right)dt +\nu |\nb
a^k|_{2,\Om}\int_t^{t+\De t} |\nb w^N|_{2,\Om} dt \\& & + \ga
\sup_{x\in S} |a^k(x)| \int_t^{t+\De t}
\left(|w^N\cdot\bar{\tau}_j|_{2,S_1} + \sum_{j,\si=1}^2 |B_{\si
j}|_{2,S_{\si}}\right)dt \\ & & \leq \sup_{x\in\Om} |a^k(x)|
\sqrt{\De t}\left( \sup_{x\in\Om} |w^N|_{2,\Om} (|\nb
w^N|_{2,\Om^T} + |\nb \de|_{2,\Om^T})+
\sup_{x\in\Om}|\de|_{2,\Om}|\nb w^N|_{2,\Om^T} \right)\\
& & + \sup_{x\in\Om} |a^k(x)| \int_t^{t+\De t} |F|_{2,\Om}dt +\nu
|\nb a^k|_{2,\Om}\sqrt{\De t}|\nb w^N|_{2,\Om^T}\\ & & + \ga
\sup_{x\in S} |a^k(x)|\left(\sqrt{\De t}|\nb w^N|_{2,\Om^T}
+\int_t^{t+\De t} \sum_{j=1}^2 |B_{j}|_{2,S})\right)dt \\ & & \leq
C(k)\left(\sqrt{\De t} + \int_t^{t+\De t} (|F|_{2,\Om}+
\sum_{j=1}^2
|B_{j}|_{2,S})dt\right). \eenn%
We can see that for given $k$ and $N\geq k$ the r.h.s. tends to
zero as $\De t\ra 0$ uniformly in $N.$ Thus, it is possible to
choose a subsequence $N_m$ such that $\psi_{N_m,k}$ converges with
$m\ra\infty$ uniformly to some continuous function $\psi_k$
for any given $k.$ Since the limit function $w$ is defined as %
\benn w(x,t) = \sum_{k=1}^{\infty} \psi_k (t) a^k(x), \eenn then
we conclude that $(w^{N_m} - w,\psi(x))$ tends to zero as
$m\ra\infty$ uniformly with respect to $t\in [0,T]$ for any
$\psi\in J_2^0(\Om)$ and $w(x,t)$ is continuous in $t$ in weak
topology. Moreover, estimates (\ref{wN-apriori}) remain true for
the limit function $w.$

We will show that $\{w^{N_m}\}$ converges strongly in
$L^2(\Om^T).$ To this end, we need to apply the following version
of the Friedrichs lemma: for any $\eps >0,$ there exists such
$N_{\eps}$ that for any $u\in W_2^1(\Om)$ the following inequality
holds:%
\benn ||u||_{2,\Om}^2 \leq \sum_{k=1}^{N_{\eps}} (u,a^k) + \eps
||\nb u||_{2,\Om}^2. \eenn%
This in terms of $u= w^{N_m} - w^{N_l}$ reads%
\benn ||w^{N_m} - w^{N_l}||_{2,\Om^T}^2 \leq \sum_{k=1}^{N_{\eps}}
\int_0^T (w^{N_m} - w^{N_l},a^k) dt + \eps
||\nb w^{N_m} - \nb w^{N_l}||_{2,\Om^T}^2. \eenn%
By (\ref{wN-apriori}), we have $$||\nb w^{N_m} - \nb
w^{N_l}||_{2,\Om^T}^2 \leq 2C^2$$ for some constant $C.$ The first
integral on the r.h.s. for given number $N_{\eps}$ can be
arbitrarily small if only $m$ and $l$ are sufficiently large, so
it tends to zero as $m,l\ra \infty.$ Therefore, $\{w^{N_m} \} $
converges strongly in $L^2(\Om^T).$

We summarize the above convergence properties of the sequence
$\{w^{N_m}\}:$

(i) $\{w^{N_m}\} \ra w $ strongly in $L^2(\Om^T)$ for some $w,$

(ii) $\{w^{N_m}\} \ra w$ weakly in $L^2(\Om)$ uniformly with
respect to $t \in [0,T],$

(iii) $\nb \{w^{N_m}\} \ra \nb w$ weakly in $L^2(\Om^T).$

With given $\Phi^k = \sum_{j=1}^k d_j(t) a^j(x)$, the sequence
$\{w^{N_m}\}$ satisfy the identities: %
\benn & & \int_{\Om} \left(\frac{d}{dt}w^{N_m}\Phi^k + (w^{N_m}
\cdot\nb w^{N_m} +\de \cdot \nb w^{N_m} +
w^{N_m}\cdot\nb\de)\Phi^k  +\nu
\mathbb{D}(w^{N_m})\mathbb{D}(\Phi^k)\right)dx \\& & + \ga
\int_{S_1} w^{N_m}\cdot\bar{\tau}_j\Phi^k\cdot \bar{\tau}_j dS_0 =
\sum_{\si,j=1}^2 \int_{S_{\si}}B_{\si j}\Phi^k \cdot\bar{\tau}_j
dS_{\si} + \int_{\Om}F\Phi^k dx. \eenn

Then, we can pass to the limit with $m\ra \infty$ to obtain the
identity for $w$. Conditions ${\rm div} w^N = 0, w^N \cdot \bar{n}
|_{S^T} = 0$ stay true for the limit function $w$ as well.

It remains to consider the limit $\lim_{t\ra 0} w(x,t).$ We note,
that $w^{N_m}$ satisfy the relation (\ref{ineq}) (if we
use the test function $w^{N_m}$). This yields%
\benn |w^{N_m}|_{2,\Om} \leq |w_0|_{2,\Om} + \int_0^t
(|F|_{2,\Om} +|B|_{2,S})dt. \eenn%
In the limit $m\ra \infty$ we obtain %
\benn |w|_{2,\Om} \leq |w_0|_{2,\Om} + \int_0^t
(|F|_{2,\Om} +|B|_{2,S})dt \eenn%
which implies $$\overline{\lim}_{t\ra 0} |w|_{2,\Om} \leq
|w_0|_{2,\Om}.$$ On the other hand, since $w^{N_m}$ tends to $w$
as $m\ra\infty$, we have $ |w^{N_m}-w_0|_{2,\Om} \ra 0$.
Therefore, $|w^{N_m}-w_0| \ra 0$ weakly in $L^2(\Om)$ as $t\ra 0$
and
$$|w_0|_{2,\Om} \leq \underline{\lim}_{t\ra 0} |w|_{2,\Om}.$$
We conclude that the limit $\lim_{t\ra 0} |w|_{2,\Om}$ exists and
is equal to $|w_0|_{2,\Om}$ where the convergence is strong - in
the norm $L^2(\Om).$

Consequently, we have proved the following result.

\begin{lemma}
Let the assumptions of Lemma~\ref{l-weak} be satisfied. Then there
exists a weak solution $w$ to problem (\ref{NS-w}) such that $w$
is weakly continuous  with respect to $t$ in $L^2(\Om)$ norm and
$w$  converges to $w_0$ as $t\ra 0$ strongly in $L^2(\Om)$ norm.
\end{lemma}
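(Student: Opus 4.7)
The plan is to construct $w$ via the Galerkin method in an orthogonal basis $\{a^k\}_{k=1}^\infty$ of $L_2(\Om) \cap J_2^0(\Om)$ that is also fundamental in $H^1(\Om)$ and consists of functions bounded on $\Om$ and on $\pa\Om$. Such a basis is standard in Ladyzhenskaya's framework for slip/mixed boundary problems. I would pose finite dimensional approximations $w^N(x,t) = \sum_{k=1}^N C_{kN}(t) a^k(x)$, with the $C_{kN}$ determined by projecting the weak form of (\ref{NS-w}) onto $a^1,\dots,a^N$ and setting $C_{kN}(0) = (w_0, a^k)$. This is an ODE system in the $C_{kN}$ solvable locally by Cauchy--Lipschitz; the energy identity obtained by using $w^N$ itself as test function reproduces exactly the calculation leading to (\ref{est-1}), so Lemma~\ref{l-weak} yields a uniform a priori bound for $w^N$ in $V_2^0(\Om^T)$, extending the ODE solutions to all of $[0,T]$.

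The central difficulty is passing to the limit $N \to \infty$ in the nonlinear terms $w^N \cdot \nb w^N$ and $w^N \cdot \nb \de$, which requires strong $L_2(\Om^T)$ convergence of a subsequence. Weak and weak-$\ast$ compactness extract weak limits $w^{N_m} \rightharpoonup w$ in $L_2(0,T; H^1(\Om))$ and $L_\infty(0,T; L_2(\Om))$. To upgrade this, I would establish equicontinuity in $t$, uniformly in $N$, of each scalar $\psi_{N,k}(t) := (w^N(t), a^k)$: integrating the Galerkin ODE over $[t, t+\De t]$ and applying Cauchy--Schwarz together with the $V_2^0$ bound, $\sup_\Om|a^k| < \infty$, $\sup_{\pa\Om}|a^k|<\infty$, and the hypotheses on $F$ and $B_{\si j}$, gives an increment estimate of order $C(k)\sqrt{\De t}$. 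Arzela--Ascoli plus a diagonal extraction then gives, along a further subsequence still denoted $\{w^{N_m}\}$, uniform convergence $\psi_{N_m,k}(t) \to (w(t), a^k)$ in $t$ for every $k$.

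The step I expect to be the hardest is promoting this to strong convergence in $L_2(\Om^T)$. The natural tool is a Friedrichs-type inequality adapted to the basis: for every $\eps > 0$ there is $N_\eps$ with
\[
\|u\|_{L_2(\Om)}^2 \le \sum_{k=1}^{N_\eps} |(u, a^k)|^2 + \eps \|\nb u\|_{L_2(\Om)}^2 \qquad \forall u \in H^1(\Om).
\]
Applied to $u = w^{N_m} - w^{N_l}$ and integrated in $t$, the first term tends to zero by uniform convergence of the $\psi_{N_m,k}$ while the second is controlled by the $V_2^0$ bound; choosing $\eps$ small and then $m,l$ large, $\{w^{N_m}\}$ is Cauchy in $L_2(\Om^T)$. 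With strong convergence in hand, I can pass to the limit in every term of the Galerkin identity tested against $\Phi^k = \sum_{j \le k} d_j(t) a^j(x)$, and density of such $\Phi^k$ in the admissible test space of Definition~\ref{weak} produces the weak formulation for $w$; the constraints $\Div w = 0$ and $w \cdot \bar n|_S = 0$ are preserved since each $a^k$ satisfies them.

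Finally, for the initial data, weak continuity of $t \mapsto (w(t), a^k)$ together with the $L_\infty(0,T;L_2(\Om))$ bound gives $w(t) \rightharpoonup w_0$ in $L_2(\Om)$ as $t \to 0$, hence $\|w_0\|_{L_2(\Om)} \le \underline{\lim}_{t \to 0}\|w(t)\|_{L_2(\Om)}$. On the other hand, the energy inequality (\ref{est-1}) passed to the limit yields $\overline{\lim}_{t \to 0}\|w(t)\|_{L_2(\Om)} \le \|w_0\|_{L_2(\Om)}$. Thus $\|w(t)\|_{L_2(\Om)} \to \|w_0\|_{L_2(\Om)}$, which combined with weak convergence in the Hilbert space $L_2(\Om)$ forces strong convergence $w(t) \to w_0$ in $L_2(\Om)$.
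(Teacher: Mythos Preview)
Your proposal is correct and follows essentially the same route as the paper: Galerkin approximation in a basis orthogonal in $L_2(\Om)\cap J_2^0(\Om)$ and fundamental in $H^1(\Om)$, uniform $V_2^0$ bounds from Lemma~\ref{l-weak}, equicontinuity of $\psi_{N,k}(t)=(w^N(t),a^k)$ via integrating the Galerkin ODE, Arzel\`a--Ascoli plus a Friedrichs-type inequality to obtain strong $L_2(\Om^T)$ convergence, and the same $\overline{\lim}$/$\underline{\lim}$ argument for strong attainment of the initial data. The only cosmetic difference is that the paper records the equicontinuity bound as $C(k)\bigl(\sqrt{\De t}+\int_t^{t+\De t}(|F|_{2,\Om}+\sum_j|B_j|_{2,S})\,dt\bigr)$ rather than $C(k)\sqrt{\De t}$, and writes the Friedrichs inequality with $(u,a^k)$ rather than $|(u,a^k)|^2$; neither affects the argument.
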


Since $v=w-\de$ we conclude the analogous existence result for $v$
formulated in Theorem~1.

\section{Global solutions to (\ref{NS-w})}
\setcounter{equation}{0}

To obtain a global estimate we write (\ref{est-1}) in the form
\benn  \frac{d}{dt} \|w\|_{L_2(\Om)}^2 + \nu \|w\|_{L_2(\Om)}^2
\le 2\|f\|_{L_{6/5}(\Om)}^2 + \va(\|\d\|_{W^s_p(\Om)})
\left(\|\d\|_{W^1_2(\Om)}^2 + \|\d_t\|_{W^1_{6/5}(\Om)}^2\right),
\eenn where $\frac{3}{p} +\frac{1}{3} \le s, p>3 $ or $p=3, s>
\frac{4}{3}.$ Hence \benn  \frac{d}{dt} \left( \|w\|_{L_2(\Om)}^2
e^{\nu t} \right) \le 2\|f\|_{L_{6/5}(\Om)}^2 e^{\nu t} +
\va(\|\d\|_{W^s_p(\Om)}) \left(\|\d\|_{W^1_2(\Om)}^2 +
\|\d_t\|_{W^1_{6/5}(\Om)}^2\right) e^{\nu t} \eenn Integrating
with respect to time from $t_1$ to $t_2$ yields \benn
\|w(t_2)\|_{L_2(\Om)}^2 e^{\nu t_2} & \le & 2
\int_{t_1}^{t_2}\|f\|_{L_{6/5}(\Om)}^2 e^{\nu t}dt
+\|w(t_1)\|_{L_2(\Om)}^2 e^{\nu t_1} \\ & & +
\va(\sup_t\|\d\|_{W^s_p(\Om)})
\int_{t_1}^{t_2}\left(\|\d\|_{W^1_2(\Om)}^2 +
\|\d_t\|_{W^1_{6/5}(\Om)}^2\right) e^{\nu t}dt.
 \eenn Thus, \benn
\|w(t_2)\|_{L_2(\Om)}^2 \le  2 e^{-\nu
t_2}\int_{t_1}^{t_2}\|f\|_{L_{6/5}(\Om)}^2 e^{\nu t}dt + \|w(t_1)\|_{L_2(\Om)}^2 e^{-\nu(t_2-t_1)} \\
+ \va(\sup_t\|\d\|_{W^s_p(\Om)}) e^{-\nu
t_2}\int_{t_1}^{t_2}\left(\|\d\|_{W^1_2(\Om)}^2 +
\|\d_t\|_{W^1_{6/5}(\Om)}^2\right)e^{\nu t} dt \eenn and this
implies \ben \bal \|w(t_2)\|_{L_2(\Om)}^2 \le
2\int_{t_1}^{t_2}\|f\|_{L_{6/5}(\Om)}^2dt
 + \|w(t_1)\|_{L_2(\Om)}^2 e^{-\nu(t_2-t_1)} \\
+ \va(\sup_t\|\d\|_{W^s_p(\Om)})
\int_{t_1}^{t_2}\left(\|\d\|_{W^1_2(\Om)}^2 +
\|\d_t\|_{W^1_{6/5}(\Om)}^2\right)dt \eal \een

Setting $t_1=0$ and $t_2=t\in R_+$ we obtain the global estimate
\ben \bal \|w(t)\|_{L_2(\Om)}^2 \le  2
\int_0^t\|f\|_{L_{6/5}(\Om)}^2 d\tau + \|w(0)\|_{L_2(\Om)}^2 e^{-\nu t}\\
+ \va(\sup_t\|\d\|_{W^s_p(\Om)})
\int_0^t\left(\|\d\|_{W^1_2(\Om)}^2 +
\|\d_t\|_{W^1_{6/5}(\Om)}^2\right)d\tau  \eal \een Let $k \in \N.$
Integrating (\ref{est-1}) with respect to time from $kT$ to $t\in
(kT, (k+1)T]$ we get  \ben \label{est-k1}\bal |w|_{V^0_2(\Om
\times (kT,t))}^2
 \leq 2\int_{kT}^t \|f\|^2_{L_{6/5}(\Om)}d\tau + \|w(kT)\|_{L_2(\Om)}^2 \\ + \va(\sup_{\tau}
 \|\d\|_{W^s_p(\Om)})\int_{kT}^t \left(\|\d\|_{W^1_2(\Om)}^2 +
\|\d_t\|_{W^1_{6/5}(\Om)}^2\right)d \tau \eal \een Therefore, \ben
\label{est-k}\bal |v|_{V^0_2(\Om \times (kT,t))}^2
 \leq 2\int_{kT}^t \|f\|^2_{L_{6/5}(\Om)}d\tau + \|v(kT)\|_{L_2(\Om)}^2 \\ + \va(\sup_{\tau}
 \|\d\|_{W^s_p(\Om)})\int_{kT}^t \left(\|\d\|_{W^1_2(\Om)}^2 +
\|\d_t\|_{W^1_{6/5}(\Om)}^2\right)d\tau \eal \een%
  We have also \ben \bal \|v(T)\|_{L_2(\Om)}^2 \le 2
\int_0^t\|f\|_{L_{6/5}(\Om)}^2d\tau + \|v(0)\|_{L_2(\Om)}^2 e^{-\nu T}\\
+ \va(\sup_t\|\d\|_{W^s_p(\Om)})
\int_0^t\left(\|\d\|_{W^1_2(\Om)}^2
+ \|\d_t\|_{W^1_{6/5}(\Om)}^2\right) d\tau  \eal \een %
We set $\mu_1= e^{-\nu T}.$ Let as assume that $$ \| v(0)
\|_{L_2(\Om)} \le A
$$ for some constant $A$ and
$$2\int_0^t\|f\|_{L_{6/5}(\Om)}^2 d\tau + \va(\sup_t\|\d\|_{W^s_p(\Om)}) \int_0^t\left(\|\d\|_{W^1_2(\Om)}^2
+ \|\d_t\|_{W^1_{6/5}(\Om)}^2\right)d\tau \le (1-e^{-\nu T})A^2$$
Thus,
$$ \|v(T)\|_{L_2(\Om)} \le A $$ so we can control the initial
condition for the next time step. This can be repeated for
intervals $(kT, (k+1)T)$. Then by (\ref{est-k}) we can prove
global existence of weak solution such that \benn v\in
V^0_2(\Om\times(kT, (k+1)T)) \ \ \forall k\in \N_0=\N \cup\{0\},
\eenn so we conclude Theorem~2.

\bigskip


\begin{thebibliography}{GK2}
\bibitem[BIN]{BIN}
             {\sc O. V. Besov,} {\sc V. P. Il'in} and {\sc S. M. Nikol'skii},
             {\it Integral representations of functions and imbedding theorems.}\/, Vol. I. Translated from the Russian. Scripta Series in Mathematics, New York-Toronto, Ont.-London, 1978. viii+345 pp.
\bibitem[G]{G}
             {\sc G. P. Galdi},
             {\it An introduction to the mathematical theory of the Navier-Stokes equations.}\/, Vol. II. Nonlinear steady problems. Springer Tracts in Natural Philosophy, 39. Springer-Verlag, New York, 1994. xii+323 pp.
\bibitem[K1]{K1}
              {\sc P. Kacprzyk},
              {\it Global regular nonstationary flow for the
               Navier-Stokes equations in a~cylindrical pipe}\/,
               Appl. Math. 34(3)(2007), 289--307.
\bibitem[K2]{K2}
              {\sc P. Kacprzyk},
              {\it Global existence for the inflow-outflow problem
              for the Navier-Stokes equations in a cylinder}\/,
              Appl. Math 36(2) (2009), 195--212.
\bibitem[L]{L}
              {\sc O. A. Ladyzhenskaya},
              {\it{Mathematical Theory of Viscous Incompressible
              Flow}}\/,
              Nauka, Moscow 1970 (in Russian).
\bibitem[RZ1]{RZ1}
              {\sc J. Renc{\l}awowicz} and {\sc W.M.
              Zaj\c{a}czkowski},
              {\it Existence of solutions to the Poisson
               equation in $L_2$-weighted spaces.}\/, to appear in Appl. Math.
\bibitem[RZ2]{RZ2}
              {\sc J. Renc{\l}awowicz} and {\sc W.M.
              Zaj\c{a}czkowski},
              {\it Existence of solutions to the Poisson
               equation in $L_p$-weighted spaces.}\/, to appear in
               Appl. Math.
\bibitem[RZ3]{RZ3}
              {\sc J. Renc{\l}awowicz} and {\sc W.M.
              Zaj\c{a}czkowski},
              {\it Large time regular solutions to the Navier-Stokes equations in
cylindrical domains}\/, Topol. Methods Nonlinear Anal. 32 (2008), 69-87.
\bibitem[Z]{Z}
              {\sc W.M. Zaj\c{a}czkowski},
              {\it Global regular nonstationary flow for the
               Navier-Stokes equations in a~cylindrical pipe}\/,
               TMNA 26(2005), 221-286.
\end{thebibliography}
\end{document}